\newfontfamily\cyrillicfont{STIX Two Text}
\newtheorem*{theorem}{Theorem}
\newtheorem*{claim}{Claim}
\newtheorem{lemma}{Lemma}
\theoremstyle{remark}
\newtheorem{definition}{Definition}
\let\eps=\varepsilon
\let\le=\leqslant
\begin{document}
\title{Every computable set is generically reducible to every computable set that does not have density $0$ or $1$}
\author{Ruslan Ishkuvatov}
\maketitle

\begin{abstract}

The notion of generic reducibility was introduced by A.~Rybalov in his CiE 2018 paper~\cite{rybalov_cie}: a set $A$ is generically reducible to set $B$ if there exists a total computable function $f$ that $m$-reduces $A$ to $B$ such that the $f$-preimage of every set that has density $0$ has density $0$. It may be considered as the ``generic version'' of the notion of $m$-reducibility.

In this note we improve one of his results \cite[Theorem 2]{rybalov_cie}  and show that every two computable sets that do not have density $0$ or $1$ are equivalent with respect to generic reducibility, and  that every computable set is reducible to every computable set that does not have density $0$ or $1$, thus providing a complete classification of computable sets with respect to generic reducibility.

\end{abstract}

\section{Definitions and results}

\begin{definition}
For a set $A \subset \mathbb{N}$ we define the \emph{density} $\rho_n(A)$ as the fraction of $A$'s elements among the first $n$ natural numbers:
\[
\rho_n(A) = \frac{\#\{k<n\mid k\in A\}}{n}
\]
The limit  $\lim_n\rho_n(A)$ (if it exists) is called the \emph{asymptotic density} of $A$ and denoted by $\rho(A)$.
\end{definition}

\begin{definition}
A set $A\subset\mathbb{N}$ is \emph{negligible} if $\rho(A) = 0$ and \emph{generic} if $\rho(A) = 1$.
\end{definition}

\begin{definition}
A total function $f\colon \mathbb{N}\to \mathbb{N}$ is called \emph{uniform} if $f^{-1}(S)$ is negligible for every negligible set $S \subset \mathbb{N}$.
\end{definition}

\begin{definition}
We say that a set $A\subset\mathbb{N}$ is \emph{generically m-reducible} to a set $B\subset\mathbb{N}$ (and write $A \leq_{gm} B$) if there exists a uniform computable function $f:\mathbb{N} \to \mathbb{N}$ that $m$-reduces $A$ to $B$, i.e., $x \in A \iff f(x) \in B$ for all $x\in\mathbb{N}$.
\end{definition}

The notion of generic m-reducibility is a special case of m-reducibility: if $A \leq_{gm} B$ then $A \leq_{m} B$. The reverse implication is not true: if $A \leq_{gm} B$ and $B$ is negligible, then $A$ is also negligible (being the preimage of the negligible set $B$ under the uniform reduction function). 

In \cite{rybalov_cie} all computable sets \emph{that have limit density} were classified with respect to the generic reducibility. Namely, Theorem 2 (p.~361) says (for computable sets $A,B\subset\mathbb{N}$ that are not empty and have non-empty complement, and have some limit density):
\begin{enumerate}
 \item if $\rho(A) = \rho(B) = 1$, then $A \leq_{gm} B$.
 \item if $\rho(A) = \rho(B) = 0$, then $A \leq_{gm} B$.
 \item if $\rho(A),\rho(B) \ne 0$ and $\rho(A),\rho(B) \ne 1$, then $A \leq_{gm} B$.
\end{enumerate}

We show that the condition of having limit density may be omitted:

\begin{theorem}
Every two computable sets that do not have density $0$ or $1$ are equivalent with respect to generic reducibility. Moreover,  every computable set is generically reducible to every computable set that does not have density $0$ or $1$.
\end{theorem}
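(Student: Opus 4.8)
The plan is to establish the second assertion—every computable $A$ is generically reducible to every computable $B$ that does not have density $0$ or $1$—since the first assertion then follows by applying it to the pairs $(A,B)$ and $(B,A)$. I will use the hypothesis on $B$ in the form that is actually needed: that $B$ and $\overline B$ both have positive lower density, i.e. there are $\nu\in(0,\tfrac12]$ and $N_0$ with $\nu\le\rho_n(B)\le 1-\nu$ for all $n\ge N_0$ (equivalently, both $B$ and $\overline B$ meet $\{0,\dots,n-1\}$ in at least $\nu n$ points once $n\ge N_0$). The reduction $f$ I will build will be \emph{linearly bounded and boundedly-to-one}: $f(x)\le C(x+1)$ for a fixed constant $C$, and $\#f^{-1}(\{y\})\le d$ for a fixed $d$. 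The point of these two properties is that any such $f$ is automatically uniform: for negligible $S$ and any $n$, since $f([0,n))\subseteq[0,Cn)$ and $f$ is at most $d$-to-$1$, one gets $\rho_n(f^{-1}(S))=\tfrac1n\#\{x<n:f(x)\in S\}\le\tfrac1n\,d\,\#(S\cap[0,Cn))\le dC\,\rho_{Cn}(S)\to 0$.

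To construct such an $f$ I would work block by block. Partition the domain past $N_0$ into consecutive intervals $I_j=[n_j,n_{j+1})$ with $n_0=N_0$ and $n_{j+1}=2n_j$ (the finitely many $x<N_0$ are sent to $\min B$ if $x\in A$ and to $\min\overline B$ otherwise). With $C:=2/\nu$, for each $j$ choose a target set $J_j\subseteq[0,Cn_j)$ consisting of $\#I_j$ elements of $B$ together with $\#I_j$ elements of $\overline B$, all avoiding $J_0\cup\dots\cup J_{j-1}$; this is possible because $[0,Cn_j)$ already contains at least $\nu Cn_j=2n_j$ elements of $B$ (as $Cn_j\ge N_0$), of which the earlier targets consume only $\sum_{j'<j}\#I_{j'}=n_j-N_0<n_j$, while $\#I_j=n_j$, and symmetrically for $\overline B$. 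Now define $f$ on $I_j$ so as to map $A\cap I_j$ injectively into the $B$-part of $J_j$ and $\overline A\cap I_j$ injectively into the $\overline B$-part—possible since each of $A\cap I_j$ and $\overline A\cap I_j$ has at most $\#I_j$ elements, the size of each part. The resulting $f$ is total and computable, satisfies $x\in A\iff f(x)\in B$, is injective on every block (so $d=1$), and for $n\in I_j$ has $f([0,n))\subseteq\{\min B,\min\overline B\}\cup\bigcup_{j'\le j}J_{j'}\subseteq[0,Cn_j)\subseteq[0,Cn)$ for all large $j$, so (after enlarging $C$ to absorb the two exceptional values) $f$ is linearly bounded. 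The uniformity estimate of the first paragraph, refined by noting $n\ge n_j$, then gives $\rho_n(f^{-1}(S))\le\tfrac{N_0}{n_j}+C\rho_{Cn_j}(S)\to 0$, so $f$ is uniform, which finishes the proof; the equivalence assertion follows by symmetry.

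The step I expect to be the real obstacle is getting the hypothesis into the right form. The mere condition $\rho(B)\notin\{0,1\}$—allowing $\rho(B)$ not to exist—is not enough: if $\limsup_n\rho_n(B)=1$ then $\emptyset\not\le_{gm}B$, because a reduction would have to map arbitrarily long initial segments of $\mathbb N$ into $\overline B$ at scales where $\overline B$ is extremely sparse, and no uniform function can do this (a boundedly-to-one linearly bounded map into $\overline B$ would force $\rho_{Cn}(\overline B)$ bounded below at every scale, whereas a highly-many-to-one map already has some singleton—a negligible set—with a preimage of positive upper density). Hence the usable hypothesis is exactly that $B$ and $\overline B$ have positive lower density, and this is precisely what makes the construction above go through: the geometric growth $n_{j+1}=2n_j$ keeps every target inside $[0,Cn_j)$, and the lower-density bound guarantees a fresh supply of elements of both $B$ and $\overline B$ within $[0,Cn_j)$ at every stage. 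Once this is arranged, the remaining checks—computability, the $m$-reduction property, and the uniformity limit—are all routine.
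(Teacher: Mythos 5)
There is a genuine gap, and it is exactly at the step you yourself flagged as the ``real obstacle.'' The theorem's hypothesis is that neither $B$ nor $\overline B$ is negligible, i.e.\ $\limsup_n\rho_n(B)>0$ and $\liminf_n\rho_n(B)<1$; the whole point of the statement (its improvement over Rybalov) is that the density of $B$ need not exist, so $\liminf_n\rho_n(B)$ may be $0$ and $\limsup_n\rho_n(B)$ may be $1$. You replace this by the strictly stronger assumption $\nu\le\rho_n(B)\le 1-\nu$ for all large $n$, and your block construction (which is fine under that assumption) then only recovers essentially the already-known case of sets whose density is bounded away from $0$ and $1$; it does not prove the stated theorem.

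Moreover, your justification for strengthening the hypothesis --- the claim that $\limsup_n\rho_n(B)=1$ implies $\emptyset\not\le_{gm}B$ --- is false, and the dichotomy you offer for it is a false dichotomy. A uniform reduction need not be linearly bounded or boundedly-to-one, and a map of unbounded multiplicity need not send some singleton to a preimage of positive upper density. Whenever $\overline B$ is non-negligible, pick some $\eps>0$ and disjoint consecutive intervals of lengths $n_1\ll n_2\ll\ldots$ in the target in which the density of $\overline B$ exceeds $\eps$ (such intervals exist at infinitely many scales, even if $\overline B$ is extremely sparse at other scales, because $\limsup_n\rho_n(\overline B)>\eps$), and map domain blocks of much larger lengths $N_1\ll N_2\ll\ldots$ cyclically onto the $\overline B$-points of the corresponding intervals. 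Each singleton then receives only about a $1/(\eps n_i)$ fraction of its block, so singletons (and, by the same density count, all negligible sets) have negligible preimages, yet the range lies in $\overline B$; hence $\emptyset\le_{gm}B$. This ``cyclic, heavily many-to-one but evenly spread'' device is precisely what lets the paper get by with the weak hypothesis, and it is the idea missing from your argument; to repair your proof you would need to abandon the linear-bound/bounded-multiplicity framework and handle the scales where $\rho_n(B)$ or $\rho_n(\overline B)$ is small by this kind of recycling of the good scales.
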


In this way we get a complete classification of computable sets with respect to generic reducibility: 
\begin{center}
\begin{tikzpicture}
\draw(0,2) node{$\varnothing$};
\draw (1.2,2) node{$\le_{gm}$};
\draw(0,0) node{$\mathbb{N}$};
\draw (1.2,0) node{$\le_{gm}$};
\draw (4.5,2) node{\fbox{non-empty sets of density $0$}};
\draw (4.5,0) node{\fbox{sets of density $1$ except $\mathbb{N}$}};
\draw (12,1) node{\fbox{sets that do not have density $0$ or $1$}};
\draw (8,1.6) node[rotate=-25]{$\le_{gm}$};
\draw (8,0.4) node[rotate=25]{$\le_{gm}$};
\end{tikzpicture}
\end{center}
(all sets are assumed to be computable).

\section{Proofs}

We start with a result that does not mention computability (and then consider its effective version):

\begin{theorem}
Let $A$ be an arbitrary subset of $\mathbb{N}$. Assume that $B$ is a subset of $\mathbb{N}$ and neither $B$ nor its complement are negligible. Then there exists a uniform \textup(total\textup) function that reduces $A$ to $B$, i.e., a function that maps $A$ into $B$ and the complement of $A$ into the complement of $B$.
\end{theorem}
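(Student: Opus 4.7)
The plan is to partition $\mathbb{N}$ into consecutive intervals $J_1, J_2, \ldots$ such that on each $J_k$ both $B$ and its complement occupy at least a fixed positive fraction of the positions — specifically, $|B \cap J_k|/|J_k| \in [1/C, 1 - 1/C]$ for some constant $C$. Inside each $J_k$ I then define $f$ to send $A \cap J_k$ into $B \cap J_k$ and $\overline A \cap J_k$ into $\overline B \cap J_k$, distributing inputs as evenly as possible. Since $|A \cap J_k|/|B \cap J_k| \le |J_k|/|B \cap J_k| \le C$ (and the same bound for the $\overline B$-side), this can be arranged so that every $y \in J_k$ has at most $C + 1$ preimages under $f$.

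To produce the partition, I use that the hypothesis gives $\limsup \rho_n(B) > 0$ and $\liminf \rho_n(B) < 1$; together with the slow-variation property $|\rho_{n+1}(B) - \rho_n(B)| \le 1/(n+1)$, this forces the set of subsequential limits of $\rho_n(B)$ to be a closed interval meeting $(0,1)$. Pick some $t$ in that intersection and $\epsilon > 0$ small enough that $(t - 2\epsilon, t + 2\epsilon) \subset (1/C, 1 - 1/C)$; then $\mathcal{N}_\epsilon := \{N : \rho_N(B) \in (t - \epsilon, t + \epsilon)\}$ is infinite. Set $r_0 = 0$ and inductively choose $r_k \in \mathcal{N}_\epsilon$ with $r_k \ge (1 + 1/\epsilon)\, r_{k-1}$. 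A short computation using
\[
\frac{|B \cap J_k|}{|J_k|} - \rho_{r_k}(B) \;=\; \bigl(\rho_{r_k}(B) - \rho_{r_{k-1}}(B)\bigr) \cdot \frac{r_{k-1}}{r_k - r_{k-1}}
\]
shows the left side has absolute value at most $\epsilon$, so the block density lies in $(t - 2\epsilon, t + 2\epsilon) \subset (1/C, 1 - 1/C)$ as required.

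Uniformity is then immediate: because $f(J_k) \subseteq J_k$ and each $y \in J_k$ has at most $C + 1$ preimages, we have
\[
|f^{-1}(T) \cap [0, n)| \;\le\; (C+1)\, |T \cap [0, n)|
\]
for every $T \subseteq \mathbb{N}$, whence $\rho_n(f^{-1}(T)) \le (C+1) \rho_n(T) \to 0$ whenever $T$ is negligible. The main obstacle I expect is the interpolation step — verifying that $\mathcal{N}_\epsilon$ contains arbitrarily large elements so that the inductive choice of $r_k$ never fails — which follows from the slow-variation bound forcing $\rho_n(B)$ to take every value in its limit set infinitely often.
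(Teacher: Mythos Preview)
Your uniformity argument has a real gap. The displayed inequality
\[
|f^{-1}(T)\cap[0,n)| \;\le\; (C+1)\,|T\cap[0,n)|
\]
holds at the aligned endpoints $n=r_k$, but for $n$ strictly inside a block it does not follow from ``$f(J_k)\subseteq J_k$'' together with bounded multiplicity: those hypotheses give only $f([r_{k-1},n))\subseteq J_k$, not $f([r_{k-1},n))\subseteq[0,n)$. Worse, this is not merely a flaw in the estimate---the map $f$ you build can fail to be uniform. Your growth condition $r_k\ge(1+1/\epsilon)r_{k-1}$ imposes no \emph{upper} bound on $r_k/r_{k-1}$, and $\mathcal N_\epsilon$ can have unbounded multiplicative gaps, so one may have $r_k/r_{k-1}\to\infty$. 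Take $A=\mathbb N$ and arrange $B$ so that $B\cap J_k$ is the right half of $J_k$ (consistent with $\rho_{r_k}(B)=\tfrac12$ for all $k$). Under the natural cyclic reading of ``distribute evenly'', the first $r_{k-1}$ points of $J_k$ are sent to the first $r_{k-1}$ elements of $B\cap J_k$, all of which lie beyond $(r_{k-1}+r_k)/2$. Let $T$ be the union over $k$ of those $r_{k-1}$ targets. Then $|T\cap[0,r_k)|=O(r_{k-1})=o(r_k)$ and one checks $T$ is negligible, yet $[r_{k-1},2r_{k-1})\subseteq f^{-1}(T)$, so $\rho_{2r_{k-1}}(f^{-1}(T))\ge\tfrac12$ for every $k$ and $f$ is not uniform.

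The paper's proof is organised differently (it factors through the set of even numbers via two lemmas), but the point of contrast relevant to your gap is how it treats non-aligned prefixes: it uses \emph{two} interval systems, mapping long domain blocks of length $N_i$ cyclically onto the $B$-elements of much shorter range blocks of length $n_i$, and imposes the extra condition $n_i=o(N_1+\cdots+N_{i-1})$ precisely so that one incomplete cycle contributes negligibly to any prefix density. Your single partition, serving simultaneously as domain and range, cannot supply this slack; repairing it would require either an upper bound on $r_k/r_{k-1}$ (unavailable, since $\mathcal N_\epsilon$ can have arbitrarily long multiplicative gaps) or a pointwise guarantee like $f(x)\le x$ (unavailable, since $B\cap J_k$ need not meet every prefix of $J_k$).
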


\begin{proof}
The statement of the theorem follows from two lemmas.

\begin{lemma}\label{1}
For an arbitrary set $A$ there exists a uniform function that reduces $A$ to the set of even numbers \textup(i.e., each element of $A$ is mapped to an even number and each element of its complement is mapped to an odd number\textup).
\end{lemma}

\begin{lemma}\label{2}
Let $B\subset\mathbb{N}$ be a set such that neither $B$ nor its complement is negligible. Then there exists a uniform function that reduces the set of even numbers to $B$.
\end{lemma}

Indeed, the composition of these two functions reduces $A$ to $B$ while being uniform.

\begin{proof}[Proof of Lemma~\ref{1}]
We map elements of $A$ to different even numbers preserving the order: the least element of $A$ is mapped to $0$, the next one is mapped to $2$ and so forth. In the same way the elements of $A$'s complement are mapped to odd numbers. (If $A$ or its complement are finite some elements of $\mathbb{N}$ are not in the range of the function.)

Let us show that this map (called $f$ in the sequel) is uniform. Let $B$ be a negligible set. Consider an arbitrary initial segment of $\mathbb{N}$. We are interested in the density of the preimage of $B$ in this segment. The segment can be divided in two parts: the elements of $A$ and the elements of its complement. The density of $f^{-1}(B)$ in the segment is a weighted average of its densities in the both parts. These two densities are the densities of $B$ in some initial segments of even numbers and odd numbers respectively. If both $A$ and its complement are infinite, the lengths of these segments grow infinitely, and since $B$ is negligible, both densities converge to $0$. If $A$ or its complement are finite, the weight of the finite part in the weighted average of densities converges to $0$, so this part can be ignored.
\end{proof}

\begin{proof}[Proof of Lemma~\ref{2}]
Let us prove an auxilary statement first:

\begin{claim}
 Assume that a set $B\subset\mathbb{N}$ is not negligible. Then there exists a uniform function $f$ whose range is contained in $B$.
\end{claim}


\begin{proof}[Proof of the claim]
Suppose that the density of $B$ in initial segments exceeds some $\eps$ infinitely often (this happens for some $\eps>0$ if $B$ is not negligible). Then we can split $\mathbb{N}$ into intervals such that in each interval the density of $B$ is greater than $\eps$ (we can construct these intervals consecutively; if the next interval is much longer than the all previous ones combined, the density of $B$ in this interval is close to its density in some initial segment and therefore exceeds $\eps$ at some moment). Let the lengths of these intervals be $n_1, n_2, \ldots$ respectively. Without loss of generality we may assume that $n_1 \ll n_2 \ll \ldots$ (see below about the exact requirements for the lengths).
The number of elements of $B$ in these intervals is at least $\eps n_1$, $\eps n_2$, etc.

To construct the function $f$ with the required properties, let us choose some $N_1 \ll N_2 \ll \ldots$ (much greater than $n_1, n_2, \ldots$) and split $\mathbb{N}$ into consecutive intervals of lengths $N_1$, $N_2$, etc. The function $f$ will cyclically map the interval of length $N_i$ to the elements of $B$ in the interval of length $n_i$ of the initial partition (we may assume for simplicity that $N_i$ is a multiple of the number of the elements of $B$ in the interval of length $n_i$).

Let $X$ be a negligible set. Let us show that its preimage $f^{-1}(X)$ is negligible. The fraction of $X$ in an interval of length $n_i$ converges to $0$ as $i\to\infty$ (we assume here that $n_i$ is greater than the sum of the lengths of the previous intervals, so deleting the previous intervals could increase the density at most by factor $2$, and the density of $X$ in the initial segment converges to $0$). On the other hand, the fraction of $B$-elements in the same interval of length $n_i$ does not converge to $0$ (exceeds $\eps$). Therefore, the fraction of $X$-elements among $B$-elements (in the same interval) converges to $0$. This fraction equals the density of $f^{-1}(X)$ in the interval of length $N_i$ of the second partition (in the preimage space). If the lengths $N_1, N_2, \ldots$ grow fast enough, then the fraction of  $f^{-1}(X)$ in the ``aligned'' initial segments (i.e., the initial segments that end on the boundaries between intervals of lengths $N_1,N_2,\ldots$) also converges to $0$.

However, we should also care about non-aligned initial segments. On the interval of length $N_i$ the mapping $f$ is periodic (see the construction above), and one period enumerates all the elements of $B$ in the $n_i$-interval. So the density of $f^{-1}(X)$ in each period is equal to the density of $X$ among the elelemts of $B$ (inside the $n_i$-interval), and this is OK. The problem is that while the density of $f^{-1}(X)$ in a single cycle inside $N_i$-interval is OK, this density may vary substantially in some parts of this cycle. But if $n_i$ is small enough compared to $N_1 + N_2 + \ldots + N_{i-1}$ (if $n_i = o(N_1 + N_2 + \ldots + N_{i-1})$, to be precise), then the weight of this part of the period in the density for the entire initial segment converges to $0$ (note that the length of the period inside $N_i$ is the number of $B$-elements in $n_i$-interval and therefore is bounded by $n_i$), so the last non-full period can be ignored.

This is the proof scheme. Let us list the requirements we used: 
\begin{itemize}
\item $n_1+\ldots+n_{i-1}=o(n_i)$, to get the lower bound for the density of $X$ in $n_i$-interval;
\item $N_1+\ldots+N_{i-1}=o(N_i)$, to estimate the density of $f^{-1}(X)$ in the initial segment that contains intervals of length $N_1,\ldots,N_{i}$;
\item $n_i=o(N_1+\ldots+N_{i-1})$, to deal with the non-aligned initial segments and the non-full periods. 
\end{itemize}
It is easy to see that these requirement can be fulfilled (all at the same time); first we choose $n_i$ satisfying the first requirement; then we choose $N_i$ that are large enough and grow fast enough. This finishes the proof of the Claim.
\end{proof}

This statement (used twice) provides two uniform functions whose images are contained in $B$ and its complement. Let us combine them, using the first function on the set of even numbers and the second one on the set of odd numbers. The combined function is also uniform. Indeed, for every negligible set $X$ its preimage is the union of two sets (the preimages for the two parts). One of them is negligible among the even numbers, while the other one is negligible among the odd numbers. Thus the combined preimage of $X$ is also negligible. Lemma~\ref{2} is proven.
\end{proof}
As we have seen, this finishes the proof of our theorem.
\end{proof}

It remains to note that the argument above can be effectivized in a straightforward way. If the given sets (the set $A$ in Lemma~\ref{1}, and the set $B$ that is not negligible and has non-negligible complement in Lemma~\ref{2}) are computable (decidable), the reduction functions are also computable for trivial reasons. (For Lemma~\ref{2} we need to know the value of $\eps$ to construct the required function, but we may choose and fix some rational $\eps$.) This remark finishes the proof of our main result : if $A \subset \mathbb{N}$ is a computable set, and $B$ is a computable set that is not negligible and has non-negligible complement,  then $A \leq_{gm} B$.

\end{document}